





\RequirePackage{tikz}
\documentclass[pdflatex,sn-mathphys]{sn-jnl}


\usepackage{longtable}
\usepackage{graphicx}
\usepackage{xcolor}
\usepackage{amsmath}
\usepackage{amssymb}
\usepackage{amsthm}
\usepackage{mathtools}
\usepackage{subcaption}
\usepackage{booktabs}
\usepackage{multirow}
\usepackage{float}
\usepackage{url}


\jyear{2022}%

\usetikzlibrary{shapes,patterns}
\tikzset{set/.style={shape=rectangle, rounded corners, align=center, fill=gray!25, draw=gray!50}}
\tikzset{element/.style={shape=circle, draw, align=center, inner sep=0pt, minimum size=.8cm}}
\tikzset{elementw/.style={shape=circle, draw, align=center, inner sep=0pt, minimum size=.8cm}}
\tikzset{elementg1/.style={shape=circle, draw, align=center, inner sep=0pt, minimum size=.8cm, fill=gray!15}}
\tikzset{elementgm/.style={shape=circle, draw, align=center, inner sep=0pt, minimum size=.8cm, fill=gray!40}}
\tikzset{elementg2/.style={shape=circle, draw, align=center, inner sep=0pt, minimum size=.8cm, fill=gray!90}}
\tikzset{elementb/.style={shape=circle, draw, align=center, inner sep=0pt, minimum size=.8cm, fill=black}}
\tikzset{fill_black/.style={fill=black}}
\tikzset{fill_gray1/.style={fill=gray!15}}
\tikzset{fill_graym/.style={fill=gray!40}}
\tikzset{fill_gray2/.style={fill=gray!90}}
\tikzset{root/.style={shape=rectangle, draw, align=center, inner sep=0pt, minimum size=.80cm}}
\tikzset{image/.style={shape=circle, draw=none, align=center, inner sep=0pt, minimum size=.40cm, fill=gray!70}}
\tikzset{rootimage/.style={shape=rectangle, draw, align=center, inner sep=0pt, minimum size=.40cm, fill=gray!70}}

\theoremstyle{thmstyleone}%
\newtheorem{theorem}{Theorem}[section]
%
\newtheorem{lemma}{Lemma}[section]

\theoremstyle{thmstyletwo}%

\theoremstyle{thmstylethree}%
\newtheorem{definition}{Definition}%

\raggedbottom

\begin{document}

\title[On the arboreal jump number of a poset]{On the arboreal jump number of a poset\footnote{\textit{Preprint submitted to Order}}}


\author[1]{\fnm{Evellyn} \sur{S. Cavalcante}}\email{evellynsoares@dcc.ufmg.br}

\author*[2]{\fnm{Sebastián} \sur{Urrutia}}\email{sebastian.a.urrutia@himolde.no}

\author[1]{\fnm{Vinicius} \sur{F. dos Santos}}\email{viniciussantos@dcc.ufmg.br}

\affil[1]{\orgdiv{Department of Computer Science}, \orgname{Federal University of Minas Gerais - UFMG}, \orgaddress{\street{Av. Presidente Antônio Carlos, 6627, Pampulha}, \city{Belo Horizonte}, \postcode{31270-901}, \state{Minas Gerais}, \country{Brazil}}}

\affil*[2]{\orgdiv{Faculty of Logistic}, \orgname{Molde University College}, \orgaddress{\street{Britvegen 2}, \city{Molde}, \postcode{NO-6410}, \state{M{\o}re og Romsdal}, \country{Norway}}}


\abstract{
A jump is a pair of consecutive elements in an extension of a poset which are incomparable in the original poset.
The arboreal jump number is an NP-hard problem that aims to find an arboreal extension of a given poset with minimum number of jumps.
The contribution of this paper is twofold: (i)~a characterization that reveals a relation between the number of jumps of an arboreal order extension and the size of a partition of its elements that satisfy some structural properties of the covering graph; (ii)~a compact integer programming model and a heuristic to solve the arboreal jump number problem along with computational results comparing both strategies.
The exact method provides an optimality certificate for 18 out of 41 instances with execution time limited to two hours.
Furthermore, our heuristic was able to find good feasible solutions for all instances in less than three minutes.
}

\keywords{posets, order extensions, integer programming model, heuristic}


\maketitle

\newpage

\textbf{Declarations}

\textbf{Funding Statement:} This study was financed in part by:

(E.C.) The Coordenação de Aperfeiçoamento de Pessoal de Nível Superior – Brasil (CAPES) – Finance Code 001.

(S.U.) None.

(V.S.) FAPEMIG, grant number APQ-01707-21, and CNPq, grant number 311679/2018-8. 

\textbf{Data Availability Statement:} Benchmark instances used in this manuscript are available upon request.

\textbf{Conflict of Interests Statement:} The authors have no conflict of interests on the content of this manuscript.

\textbf{Author Contribution Statement:} All three authors conceived the research question, developed the work, wrote the manuscript, and participated in the revision process. Evellyn Cavalcante coded the algorithmic approaches and performed the computational experiments.

\newpage

\section{Introduction}
\label{sec:introduction}
Partially ordered sets (posets) provide a natural way to represent precedence constraints over entities and are useful to understand the combinatorial structure of problems through analytical and graphical tools. One fundamental problem is the \textit{jump number problem}, that aims to find an optimal linear extension. An optimal linear extension is a total order of a poset respecting its precedence relations while minimizing the number of consecutive pairs of elements that are incomparable in the original poset. The decision version of this problem is $\mathcal{NP}$-complete \cite{syslo1984}.
The jump number problem is widely studied in the literature and has applications in routing and scheduling problems.

There are different approaches to solve the jump number problem, such as polynomial-time algorithms for easy cases  \cite{syslo1984,chein1980,duffus1982,colbourn1985,steiner1985,steiner1987,syslo1987,syslo1988}, algorithms based in decomposition and dynamic programming \cite{steiner1985,bianco1997}, approximation algorithms \cite{syslo1995,Felsner1990,Mitas1991,krysztowiak2013,yuan2015}, heuristics and meta-heuristics \cite{bianco1997,ngom1998,gambardella2000,montemanni2007,krysztowiak2015}, algorithm based on tree searches \cite{libralesso2020}, parameterized algorithms \cite{elzahar1984,kratsch2013} and integer programming \cite{kubo1991,balas1995,ascheuer2000,ahmed2001,gouveia2006,sherali2006,gouveia2018,mak2007}.

This paper focus on a generalization of the jump number problem, called arboreal jump number problem, which aims to find an arboreal extension having a minimum number of jumps. 
The arboreal jump number problem is equivalent to the jump number problem when the input poset has a maximum element.

The first work about the arboreal jump number problem proved its $\mathcal{NP}$-completeness, even for the restricted class of interval orders \cite{figueiredo2013}. In addition, the authors proposed a polynomial-time algorithm to find minimal arboreal extensions, characterized arboreal orders for N-free posets and provided an upper bound for the arboreal jump number.
In \cite{cavalcante2019}, the authors presented an exponential-size mathematical integer model to solve the problem.
To the best of our knowledge, those are the only works that tackled the arboreal jump number problem.
Another work worth mentioning characterized arboreal extensions using the concept of linear extensions and proposed a polynomial-time algorithm to generate arboreal extensions for a particular poset \cite{queiroz2005}.

This work contributes both theoretical and algorithmic to the literature of the arboreal jump number problem.
Our main contributions are:
\begin{itemize}
    \item introduce a relation between the number of jumps of an arboreal order extension and the size of a partition of the elements of the poset satisfying some structural properties. This relation is presented as a characterization of the instances of the problem that admit an arboreal extension with a certain number of jumps (section~\ref{sec:characterization});
    \item define and implement a compact integer programming model for the arboreal jump number problem with a polynomial number of constraints (section~\ref{sec:strategies});
    \item describe and implement a fast greedy heuristic based on an algorithm from the literature (section~\ref{sec:strategies});
    \item report a computational comparison of the performance of the proposed approaches, using the set of instances from TSPLIB \cite{tsplib} (section~\ref{sec:results}).
\end{itemize}

\subsection{Preliminaries}
A \textit{partial order} (or \textit{partially ordered set} or \textit{poset}) $\mathcal{P}$ is a pair $(V, R)$, where $V$ is a finite set, called \textit{ground set}, and $R$ is a reflexive, anti-symmetric and transitive binary relation on $V$.
It can be represented by a directed graph $G_{\mathcal{P}} = (V, A)$, in which each binary relation $(x,y) \in R$ corresponds to an arc from $x$ to $y$, that is, $(x,y) \in A$. 
Figure \ref{fig:dag} shows an example of a poset with its directed graph representation.

Two elements $x,y \in V$ are \textit{comparable}, $x \perp y$, if $(x,y) \in R$ or $(y,x) \in R$ and \textit{incomparable}, $x\|y$, otherwise.
Let $x,y \in V$, we say that $y$ \textit{covers} $x$ in $\mathcal{P}$, $x \prec_\mathcal{P} y$, when $(x,y) \in R$ and there is not a third element $z \in V$ such that $(x,z), (z,y) \in R$, for $x \neq y, x \neq z, y \neq z$. 
This is called \textit{covering relation}. 
We omit the subscript in case there is no doubt of which poset we refer.
A \textit{minimal element} of a poset is one that does not cover any other element.
If a poset has only a single minimal element, we called it a \textit{root} or \textit{minimum}. In such a case, we say the poset is \textit{rooted}.

The \textit{covering graph} $C_\mathcal{P} = (V, \hat{R})$ is associated with the covering relation and the arc $(x,y) \in \hat{R}$ if and only if $x \prec_\mathcal{P} y$.
Note that $C_\mathcal{P}$ is equivalent to the transitive reduction of $G_{\mathcal{P}}$, ignoring reflexivity. A \textit{Hasse diagram} is a representation of the covering graph of a poset in which the elements $x \in V$ are represented by points $p(x)$ of the plane satisfying two rules: (i)~if $(x,y) \in R$, $p(x)$ is below the horizontal line going through $p(y)$, (ii)~$p(x)$ and $p(y)$ are linked by a line segment if and only if $x \prec y$.
Figure \ref{fig:hasse_dag} is a Hasse diagram of the poset present in Figure \ref{fig:dag}.

\begin{figure}[htb]
	\centering
	\begin{subfigure}[t]{0.4\textwidth}
		\centering
        \scalebox{.7}{\begin{tikzpicture}
		\node [element] (1) at (-6, 1) {1};
		\node [element] (2) at (-4, 2) {2};
		\node [element] (3) at (-2, 3) {3};
		\node [element] (5) at (-4, 0) {4};
		\node [element] (6) at (-2, 1) {5};
		\node [element] (7) at (-6, -1) {6};
		\node [element] (8) at (0, 1) {7};
		\draw[->,-stealth] (1) to (2);
		\draw[->,-stealth] [bend left=45, looseness=0.75] (1) to (3);
		\draw[->,-stealth] (2) to (3);
		\draw[->,-stealth] (6) to (8);
		\draw[->,-stealth] (2) to (6);
		\draw[->,-stealth] [bend left=15, looseness=1.25] (1) to (8);
		\draw[->,-stealth] (1) to (6);
		\draw[->,-stealth] (1) to (5);
		\draw[->,-stealth] (5) to (6);
		\draw[->,-stealth] [bend left=15, looseness=1.25] (7) to (6);
		\draw[->,-stealth] (7) to (5);
		\draw[->,-stealth] [bend right=15, looseness=0.45] (7) to (8);
		\draw[->,-stealth] [bend left=15, looseness=1.25] (2) to (8);		
		\draw[->,-stealth] (5) to (8);
		\draw[->,-stealth] [in=135, out=45, loop] (1) to ();
		\draw[->,-stealth] [in=135, out=45, loop] (2) to ();
		\draw[->,-stealth] [in=135, out=45, loop] (3) to ();
		\draw[->,-stealth] [in=135, out=45, loop] (5) to ();
		\draw[->,-stealth] [in=135, out=45, loop] (6) to ();
		\draw[->,-stealth] [in=135, out=45, loop] (7) to ();
		\draw[->,-stealth] [in=135, out=45, loop] (8) to ();
\end{tikzpicture}}
        \caption{Graph representation of poset~$\mathcal{P}$}
        \label{fig:dag}
	\end{subfigure}
	\hfill
	\begin{subfigure}[t]{0.4\textwidth}
		\centering
        \scalebox{.7}{\begin{tikzpicture}
		\node [element] (1) at (-3, -2) {1};
		\node [element] (2) at (-3, 0) {2};
		\node [element] (3) at (-3, 2) {3};
		\node [element] (5) at (0, 0) {4};
		\node [element] (6) at (0, 2) {5};
		\node [element] (7) at (0, -2) {6};
		\node [element] (8) at (0, 3.5) {7};
		\draw (1) to (2);
		\draw (2) to (3);
		\draw (6) to (8);
		\draw (2) to (6);
		\draw (1) to (5);
		\draw (5) to (6);
		\draw (7) to (5);
\end{tikzpicture}}
        \caption{Hasse diagram of poset $\mathcal{P}$}
        \label{fig:hasse_dag}
	\end{subfigure}
	\caption{Graphical representations of poset $\mathcal{P} = (V,R)$, where $V=\{1,2,3,4,5,6,7\}$, $R~=~\{(1,1),(1,2),(1,3),(1,4),(1,5),(1,7),(2,2),(2,3),(2,5),(2,7),(3,3),(4,4),(4,5),$ $(4,7),(5,5),(5,7),$ $(6,6),(6,4),(6,5),(6,7)\}$}
\end{figure}
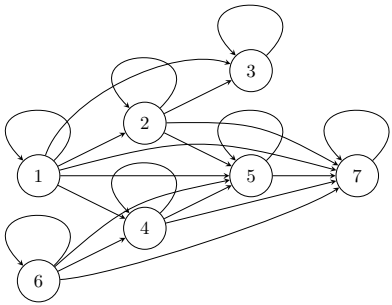
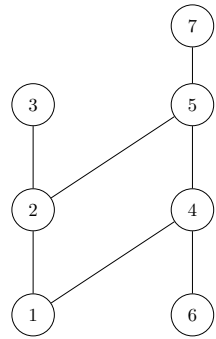

An \textit{induced subposet} of a given poset $\mathcal{P} = (V,R)$ is a poset $\mathcal{B} = (U,X)$, where $U \subseteq V$ and for all $x,y \in U$, $(x,y) \in X$ if and only if $(x,y) \in R$.
Figure~\ref{fig:induced_dag} shows an example of the subposet induced by elements $3, 4, 6$ and $7$ of the poset presented in Figure \ref{fig:dag}.

\begin{figure}[hbt]{}
    \centering
    \scalebox{.7}{\begin{tikzpicture}
		\node [element] (3) at (-2, 3) {3};
		\node [element] (4) at (-4, 0) {4};
		\node [element] (6) at (-6, -1) {6};
		\node [element] (7) at (0, 1) {7};
		\draw[->,-stealth] (6) to (4);
		\draw[->,-stealth] [bend right=15, looseness=0.45] (6) to (7);
		\draw[->,-stealth] (4) to (7);
		
		\draw[->,-stealth] [in=135, out=45, loop] (3) to ();
		\draw[->,-stealth] [in=135, out=45, loop] (4) to ();
		\draw[->,-stealth] [in=135, out=45, loop] (6) to ();
		\draw[->,-stealth] [in=135, out=45, loop] (7) to ();
\end{tikzpicture}}
    \caption[Representation of an induced subposet as a directed graph.]{Directed graph representing a subposet of poset $\mathcal{P} = (V,R)$ induced by elements $3, 4, 6$ and $7$.}
    \label{fig:induced_dag}
\end{figure}
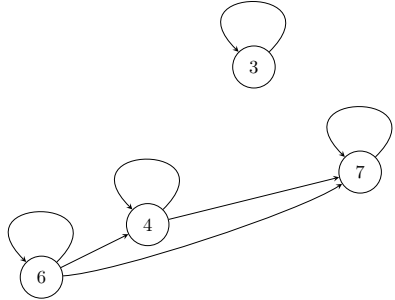

A poset $\mathcal{P} = (V,R)$ is a \textit{total order} (or \textit{linear order} or \textit{linearly ordered set} or \textit{chain}) if each pair $x,y$ of distinct elements of $V$ is comparable, i.e., either $(x,y) \in R$ or $(y,x) \in R$.
An \textit{arboreal order} is a poset with a minimum element satisfying that for all $x$ in $V$ the subposet induced by $\{y \mid (y,x)  \in R \}$ is a total order.

A poset is an arboreal order if and only if it does not contain any \textit{violation}.
There is a violation in $\mathcal{P}$ when $x \prec z$, $y \prec z$ for some $x,y,z \in V$, $x \neq y \neq z$. Element $z$ is called a \textit{violator}.
In Figure \ref{fig:hasse_dag} elements $1,4,6$ form a violation with element $4$ being the violator.

An order $\mathcal{Q} = (V, R')$ is an \textit{extension} of $\mathcal{P}$, if $R \subseteq R'$. In particular, if $R'$ does not have incomparable elements, then $\mathcal{Q}$ is a \textit{linear extension} of $\mathcal{P}$.
An \textit{arboreal extension} $\mathcal{A} = (V, R')$ of a partial order $\mathcal{P} = (V, R)$ is an extension that is an arboreal order. Note that in posets that have only one maximal (a maximum) element, an extension is arboreal if and only if it is linear.

A \textit{jump} in a extension $\mathcal{A} = (V,R^\prime)$ of $\mathcal{P}$ is a pair of covering elements $x \prec_\mathcal{A} y$ that is incomparable in $R$, $x\|_\mathcal{P}y$. 
The \textit{jump number} $s(\mathcal{P})$ of an order $\mathcal{P}$ is given by the linear extension having the minimum number of jumps.

The Jump Number Problem aims to find a linear extension $\mathcal{Q}$ of a poset $\mathcal{P}$ with $s(\mathcal{P})$ jumps.

The arboreal extension with the minimum number of jumps, that is the \textit{minimum arboreal extension}, gives the \textit{arboreal jump number} $s_a(\mathcal{P})$ of $\mathcal{P}$. 
The Arboreal Jump Number Problem aims to find an arboreal extension $\mathcal{A}$ of a poset $\mathcal{P}$ that contains $s_a(\mathcal{P})$ jumps.

Figures \ref{fig:linear_extension} and \ref{fig:arboreal_extension} show a linear extension and an arboreal extension, respectively, of the poset presented in Figure \ref{fig:hasse_dag}.

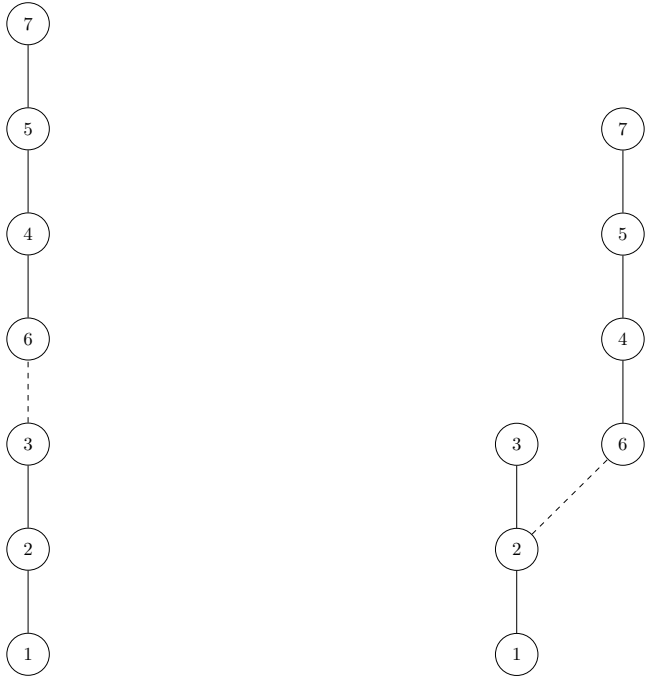
\begin{figure}
    \centering
    \begin{subfigure}[t]{.4\textwidth}
    	\centering
        \scalebox{.7}{\begin{tikzpicture}
		\node [element] (1) at (0, 0) {1};
		\node [element] (2) at (0, 2) {2};
		\node [element] (3) at (0, 4) {3};
		
		\node [element] (6) at (0, 6) {6};
		\node [element] (4) at (0, 8) {4};
		\node [element] (5) at (0, 10) {5};
		\node [element] (7) at (0, 12) {7};
		\draw[-] (1) to (2);
		\draw[-] (2) to (3);
		\draw[-, dashed] (3) to (6);
    	\draw[-] (6) to (4);
    	\draw[-] (4) to (5);
		\draw[-] (5) to (7);
\end{tikzpicture}}
        \caption{Hasse diagram of an optimal linear extension of $\mathcal{P}$. The jump number is one. This is also an optimal arboreal extension.}
        \label{fig:linear_extension}
    \end{subfigure}
    \hfill
    \begin{subfigure}[t]{.4\textwidth}
    	\centering
        \scalebox{.7}{\begin{tikzpicture}
		\node [element] (1) at (0, 1) {1};
		\node [element] (2) at (0, 3) {2};
		\node [element] (3) at (0, 5) {3};
		\node [element] (4) at (2, 7) {4};
		\node [element] (5) at (2, 9) {5};
		\node [element] (6) at (2, 5) {6};
		\node [element] (7) at (2, 11) {7};
		\draw[-] (1) to (2);
		\draw[-] (2) to (3);
		\draw[-, dashed] (2) to (6);
    	\draw[-] (6) to (4);
    	\draw[-] (4) to (5);
		\draw[-] (5) to (7);
\end{tikzpicture}}
        \caption{Hasse diagram of an optimal arboreal extension of $\mathcal{P}$. The arboreal jump number is one.}
        \label{fig:arboreal_extension}
    \end{subfigure}
    \caption{Hasse diagrams of optimal linear and arboreal extensions of poset $\mathcal{P} = (V,R)$}
\end{figure}

\section{Characterization}
\label{sec:characterization}
In this section, we present a relation between the number of jumps of an arboreal order extension and the size of a partition of the elements of the poset satisfying some structural properties. This relation is presented as a characterization of the instances of the problem that admit an arboreal extension with a certain number of jumps.

Our characterization unveils that an arboreal extension can be seen as a partition of the original poset that follows some well-define structural properties.
Even though this characterization only works for rooted posets, it will not be a limitation, since it is possible to add a root in any poset and easily find the corresponding arboreal extension of the original poset a posteriori. Lemma \ref{lm:uniqueRoot} below demonstrate this fact.

\begin{lemma}\label{lm:uniqueRoot}
Let $\mathcal{P}_i=(V_i, R_i), i = 1, \ldots, k$, be posets such that for all $(i,j), i \neq j, V_i \cap V_j = \emptyset$. 
Construct a poset $\mathcal{P} = (V,R)$, where $V = \bigcup_i V_i$ and $R = \bigcup_i R_i$.
Consider $\mathcal{Q} = (U,X)$ the poset in which $U = V \cup \{r\}$ and $X = R\cup \{(r,v) \mid v \in V\}$. 
Then $s_a(\mathcal{P}) = \sum_{i=1}^k s_a(\mathcal{P}_i) + k-1 = s_a(\mathcal{Q}) + k-1$.
\end{lemma}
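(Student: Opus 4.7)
I would prove the two equalities separately: (I) $s_a(\mathcal{P}) = \sum_i s_a(\mathcal{P}_i) + k - 1$ and (II) $s_a(\mathcal{Q}) = \sum_i s_a(\mathcal{P}_i)$, each by matching upper and lower bounds. Together they force the chain $s_a(\mathcal{P}) = \sum_i s_a(\mathcal{P}_i) + k - 1 = s_a(\mathcal{Q}) + k - 1$ stated in the lemma.

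For the upper bounds, fix an optimal arboreal extension $\mathcal{A}_i$ of each $\mathcal{P}_i$ with root $r_i$ and $s_a(\mathcal{P}_i)$ jumps. To bound $s_a(\mathcal{Q})$, glue each $r_i$ as a direct successor of $r$; the new coverings $(r, r_i)$ all belong to $X$ and are not jumps, so this arboreal extension of $\mathcal{Q}$ has exactly $\sum_i s_a(\mathcal{P}_i)$ jumps. To bound $s_a(\mathcal{P})$, chain the $\mathcal{A}_i$'s instead: fix an ordering and, for each $i \geq 2$, attach $r_i$ above a leaf of the tree built so far. The $k - 1$ glue edges each join elements from distinct $V_j$'s, which are incomparable in $\mathcal{P}$ and thus contribute $k - 1$ jumps, yielding $\sum_i s_a(\mathcal{P}_i) + (k - 1)$ in total. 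In both constructions the intra-$\mathcal{A}_i$ coverings are preserved and their jump-status is unchanged because the elements involved lie entirely inside a single $V_i$.

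The lower bounds are the delicate part. Given any arboreal extension $\mathcal{B}$ of $\mathcal{P}$ rooted at $v^* \in V_j$, for each $i$ consider the restriction $\mathcal{B}_i$ of $\mathcal{B}$ to $V_i$: each below-set is still a chain, but the resulting forest has some $\mu_i$ minimal elements. Chaining these minima yields an arboreal extension $\mathcal{A}_i^*$ of $\mathcal{P}_i$ whose jump count equals $|\text{jumps}(\mathcal{B}_i)| + (\mu_i - 1)$. Partitioning the covers of $\mathcal{B}$ into intra-$V_i$ covers and cross-component covers (the latter of which are automatically jumps and number exactly $M - 1$, where $M = \sum_i m_i$ counts the $V_i$-entry edges of $\mathcal{B}$), a bookkeeping argument reduces the target inequality $\sum_i s_a(\mathcal{P}_i) + (k - 1) \leq |\text{jumps}(\mathcal{B})|$ to the key bound $\sum_i K_i + \mu \leq M$, where $K_i$ counts the ``contracted'' intra-$\mathcal{B}_i$ jumps created when elements of $V \setminus V_i$ were skipped between two $V_i$-elements and $\mu = \sum_i \mu_i$. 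An analogous argument starting from an arboreal extension of $\mathcal{Q}$ proves (II)'s lower bound, with the $(k - 1)$ savings absent because the $r$-child edges of $\mathcal{Q}$ are free. The main obstacle is this bookkeeping, which requires an explicit injection from each $\mathcal{B}_i$'s contracted covers and extra forest-roots into the $V_i$-entry edges of $\mathcal{B}$, together with the verification that the chained $\mathcal{A}_i^*$'s remain valid arboreal extensions of the respective $\mathcal{P}_i$'s; this relies crucially on the fact that $R$ contains no cross-component relation, so cutting and re-rooting subtrees never destroys any $R$-relation.
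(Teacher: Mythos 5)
Your argument is correct in outline but takes a genuinely different route from the paper's. The paper proves the second equality by an exchange argument: it takes an optimal arboreal extension of $\mathcal{Q}$ that, among optimal ones, minimizes the number of \emph{mixed} jumps (jumps crossing two components $V_i$, $V_j$), locates a topmost mixed jump $(x,y)$ with $y\in V_j$, and reroutes $y$ to hang below the $V_j$-ancestor of $x$ closest to $x$ (or below $r$), obtaining an extension with no more jumps and fewer mixed jumps; hence some optimal extension has no mixed jumps and splits into optimal extensions of the $\mathcal{P}_i$ hanging from $r$. The first equality is then dispatched with ``a similar argument.'' You instead prove the lower bounds globally: restrict an arbitrary arboreal extension $\mathcal{B}$ to each $V_i$, re-chain the $\mu_i$ resulting trees, and charge the contracted jumps and the forest roots injectively to the roots of the maximal $V_i$-monochromatic subtrees of the Hasse tree of $\mathcal{B}$. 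That injection does close: each contracted cover has a unique head (the $V_i$-ancestors of any element form a chain), that head is a monochromatic-subtree root that is not a forest root, and every forest root is a monochromatic-subtree root; this gives $\sum_i K_i+\mu\le M$, and the arithmetic then yields both lower bounds at once. Your approach buys a bound valid for \emph{every} arboreal extension rather than only an optimal one, and it avoids the paper's only-sketched verification that the rerouted relation is still an arboreal extension; what it costs is precisely the bookkeeping you flag but do not execute, plus the (true, but needing a line) check that re-chaining the restricted forests preserves extension-hood and arboreality because $R$ contains no cross-component pair.

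One caveat affects your proof of the second equality exactly as it affects the paper's: it requires each $\mathcal{P}_i$ to be rooted, which the lemma does not state but the paper implicitly assumes (it is the blanket convention adopted immediately afterwards). In your accounting the covers leaving $r$ are ``free,'' and their number $d$ must satisfy $d\le k$ for $\sum_i s_a(\mathcal{P}_i)\le \mathrm{jumps}(\mathcal{B})$ to come out; this holds precisely because a rooted $\mathcal{P}_i$ forces $r$ to have the single child $r_i$ inside $V_i$. If instead some $\mathcal{P}_i$ is an antichain on $n\ge 2$ elements (so $k=1$), then $\mathcal{Q}$ is already arboreal with $s_a(\mathcal{Q})=0$ while $s_a(\mathcal{P}_1)=n-1$, and the second equality fails. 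Your first equality survives without the rootedness hypothesis; the second does not, so make that hypothesis explicit.
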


\begin{proof}
Let $\mathcal{P}$ and $\mathcal{Q}$ be the posets defined according to the  statement.
Note that for the second equality it is enough to prove $\sum_{i=1}^k s_a(\mathcal{P}_i)=s_a(\mathcal{Q})$. 
Let $\mathcal{A} = (U,X^\prime)$ be an optimal arboreal extension for $\mathcal{Q}$. Note that $\mathcal{A}$ has $r$ as its root.
Define mixed jumps as the pairs $(x,y) \in X^\prime$ such that $x \in V_i$, $y \in V_j$, $i \neq j$. 
In the following, we prove that there is an optimal arboreal extension $\mathcal{A}$ without mixed jumps.

Suppose $\mathcal{A}$ is an optimal arboreal extension that minimizes the number of mixed jumps and includes at least one of those.
Let $(x,y)$ be a mixed jump with $x \in V_i$ and $y \in V_j$, such that for all $z$ satisfying $(y, z) \in X'$ it holds that $z \in V_j$.
Such a pair can be found iteratively as follows. Take any jump $(x,y)$ and suppose there is a $z \not\in V_j$ with $(y, z) \in X'$. But then, since $y$ and $z$ came from different posets, there must be another mixed jump $(x',y')$ in some chain from $y$ to $z$. Then we can take this jump as $(x,y)$ and repeat this process.
This process is finite, since $\mathcal{A}$ is a poset, and the desired jump is found.

With $(x,y)$ being the mixed jump with $x \in V_i$ and $y \in V_j$, let $v$ be an element such that $v \in V_j$, $(v,x) \in X^\prime$ and there is no $u \in V_j$ in which $\{(v,u),(u,x)\} \subset X^\prime$ or the root of $\mathcal{A}$ if such an element does not exist.
Remove $(x,y)$ from $X^\prime$ and add $(v,y)$ to obtain a different arboreal extension $\mathcal{B}$ of $\mathcal{Q}$. Note that $\mathcal{B}$ is in fact an arboreal extension of $\mathcal{Q}$ because the way in which the mixed jump $(x, y)$ and the element $v$ were selected. The pair $(v,y)$ is not a mixed jump because $v$ and $y$ belong to $V_j$. 
Then, the extension $\mathcal{B}$ contains either the same number of jumps of $\mathcal{A}$, if $(v,y) \notin R_j$, or one less, otherwise.
This contradicts the assumption that $\mathcal{A}$ is optimum and minimizes the number of mixed jumps. 
Therefore, there must exist an optimal extension of $\mathcal{Q}$ without mixed jumps, then $s_a(\mathcal{Q})=\sum_i^k s_a(\mathcal{P}_i)$. 

A similar argument shows that $s_a(\mathcal{P}) = \sum_i^k s_a(\mathcal{P}_i) + k-1$, observing that $k-1$ is the necessary and sufficient amount of mixed jumps to connect optimal arboreal extensions of each $P_i$ to obtain an optimal arboreal extension of $\mathcal{P}$. 

\end{proof}


Note that Lemma~\ref{lm:uniqueRoot} holds even if $k=1$. As a consequence of that result, we are assuming that every poset is rooted unless stated otherwise. Recall that the root of a poset will also be the root of any of its arboreal extensions. 

Consider the Definition \ref{def:contractingRelation} below that will be handful to the presentation of the characterization.

\begin{definition}\label{def:contractingRelation}
Let $\mathcal{P} = (V,R)$ be a poset and let $\pi = \{\pi_1, \ldots, \pi_s\}$ be a partition of $V$. The pair $\mathcal{P}_\pi=(\pi, R_\mathbf{\pi})$ is the relation induced by $\pi$, in which $(\pi_i, \pi_j) \in R_\mathbf{\pi}$ if and only if there are $x \in \pi_i$ and $y \in \pi_j$ such that $(x,y) \in R$.
\end{definition}

Now we state a characterization for a partial order and their arboreal extensions with $s$ jumps, considering a partition of its elements satisfying four properties.

\begin{theorem}\label{thm:characterization}
	A partial order $\mathcal{P} = (V, R)$ with root $r$ has  
	an arboreal extension $\mathcal{A} = (V,R')$ with $s$ jumps, $s \geq 1$, if and only if there is a partition $\pi = \{ \pi_1$, $\pi_2$, $\ldots$, $\pi_{s+1}\}$ of $V$ satisfying the following properties:

	\begin{enumerate}
		\item each $\pi_i$ induces an arboreal subposet $\mathcal{P}_i = (\pi_i, R_i)$ of $\mathcal{P}$ with root $r_i$;\label{it:induced_subposet}
		
		\item let $\mathcal{Q} = (\pi,T)$ be the relation induced by $\pi$ as defined in Definition \ref{def:contractingRelation} and $B$ be the transitive closure of $T$, then $\mathcal{P}_\pi = (\pi,B)$ is a partial order with root $r_{\pi}$;\label{it:rooted_poset}
		
		\item there is a function $f: \pi\setminus \{r_{\pi}\} \to V$ such that $f(\pi_j) \in \pi_i$, $i \neq j$ and $(f(\pi_j), r_j) \notin R$, corresponding to the jumps of $\mathcal{A}$. 
		Moreover $\mathcal{A}_\pi = (\pi, B^\prime)$ is an arboreal order extension of $\mathcal{P}_\pi$, where $B^\prime$ is the transitive closure of $\{(\pi_i, \pi_j)\mid f(\pi_j) \in \pi_i\} \cup \{(\pi_i, \pi_i)\mid \pi_i \in \pi\}$;\label{it:f_function}
		
		\item let $g: \pi \to 2^V$ be a mapping satisfying\label{it:recursion} 
	
\begin{equation*}
g(\pi_i)=\begin{cases}
         \emptyset & \text{if } \pi_i = r_{\pi} \\
         \{f(\pi_i)\} \cup g(\pi_j), f(\pi_i)\in \pi_j & \text{otherwise,} \\
     \end{cases}
\end{equation*}
then for all $x \prec_\mathcal{P} y$ such that $x \in \pi_u$, $y \in \pi_v, u \neq v$ there is $z \in g(\pi_v)$ such that $(x,z) \in R_u$.
		
	\end{enumerate}
\end{theorem}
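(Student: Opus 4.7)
The plan is to prove the two implications separately, using the natural correspondence between the jumps of an arboreal extension and the edges that one removes from its covering tree to decompose it into components.

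For the forward direction, I would start with an optimal arboreal extension $\mathcal{A} = (V, R')$ having exactly $s$ jumps, and let $J$ be the set of those $s$ jump pairs in the covering relation of $\mathcal{A}$. Since $\mathcal{A}$ is arboreal and rooted at $r$, its covering graph is a tree, so deleting the $s$ edges of $J$ yields exactly $s+1$ subtrees that I take as $\pi_1, \ldots, \pi_{s+1}$. Property~\ref{it:induced_subposet} follows because each subtree uses only non-jump covering edges of $\mathcal{A}$, which lie in $R$, so $\mathcal{P}_i$ is arboreal with root $r_i$ inherited from $\mathcal{A}$. Property~\ref{it:rooted_poset} is immediate: contracting a rooted tree along a set of edges produces another rooted tree on the components, which is a partial order with root $r_\pi$ (the component containing $r$). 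For property~\ref{it:f_function}, I define $f(\pi_j)$ to be the tail in $V$ of the unique jump of $J$ whose head is $r_j$; the arboreal order $\mathcal{A}_\pi$ is exactly this contracted tree. Property~\ref{it:recursion} is then the content of the fact that $\mathcal{A}$ extends $\mathcal{P}$: given $x \prec_\mathcal{P} y$ with $x \in \pi_u$, $y \in \pi_v$, $u \neq v$, the unique $\mathcal{A}$-path from $r$ to $y$ passes through $x$, and the portion of this path that lies outside $\pi_v$ ends at some element $f(\pi_v)$, then continues through $f$ of the ancestor classes; since $x$ lies on this path and belongs to $\pi_u$, it is an $\mathcal{P}_u$-ancestor of the element of $g(\pi_v) \cap \pi_u$.

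For the backward direction, assume we are given a partition $\pi$ with the four properties. I would explicitly construct $\mathcal{A}$ by taking the covering relation
\[
\hat{R}' = \bigcup_{i=1}^{s+1} \hat{R}_i \cup \bigl\{(f(\pi_j), r_j) : \pi_j \neq r_\pi\bigr\},
\]
and letting $R'$ be its reflexive-transitive closure. Property~\ref{it:rooted_poset} (together with the fact that each $\mathcal{P}_i$ is a tree by property~\ref{it:induced_subposet} and that $\mathcal{A}_\pi$ is arboreal by property~\ref{it:f_function}) guarantees that this covering graph is a tree rooted at $r_{r_\pi}$, so $\mathcal{A}$ is arboreal. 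The $s$ added edges are jumps in $\mathcal{A}$ because property~\ref{it:f_function} stipulates $(f(\pi_j), r_j) \notin R$; no within-class edge is a jump by property~\ref{it:induced_subposet}; so $\mathcal{A}$ has exactly $s$ jumps. What remains, and is the main obstacle, is proving $R \subseteq R'$.

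To handle that obstacle, I would split any relation $(x,y)\in R$ into two cases. If $x$ and $y$ lie in the same class $\pi_i$, then $(x, y) \in R_i \subseteq R'$ since $\mathcal{P}_i$ is a subposet induced in $\mathcal{P}$. Otherwise, considering a covering chain in $\mathcal{P}$ from $x$ to $y$, it suffices to treat the case $x \prec_\mathcal{P} y$ with $x \in \pi_u$, $y \in \pi_v$, $u \neq v$. Property~\ref{it:recursion} provides $z \in g(\pi_v) \cap \pi_u$ with $(x, z) \in R_u$. By the recursive definition of $g$, the set $g(\pi_v)$ is precisely the sequence $\{f(\pi_v), f(\pi_{w_1}), f(\pi_{w_2}), \ldots\}$ along the unique $\mathcal{A}_\pi$-path from $r_\pi$ to $\pi_v$, so $z$ lies on the unique $\mathcal{A}$-path from $r$ to $r_v$, whence $(z, r_v) \in R'$ and then $(z, y) \in R'$ since $(r_v, y) \in R_v \subseteq R'$. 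Combining $(x,z)\in R_u \subseteq R'$ with $(z,y)\in R'$ and transitivity of $R'$ yields $(x,y) \in R'$, completing the construction and hence the proof.
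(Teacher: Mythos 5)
Your proposal is correct and follows essentially the same route as the paper's proof: the forward direction deletes the $s$ jump edges from the covering arborescence of $\mathcal{A}$ to obtain the $s+1$ classes, and the backward direction glues the class arborescences with the edges $(f(\pi_j), r_j)$ and uses property~\ref{it:recursion} to recover $R \subseteq R'$ exactly as the paper does. The only point stated loosely is property~\ref{it:rooted_poset}, where your ``contracted tree'' justification describes $\mathcal{A}_\pi$ rather than $\mathcal{P}_\pi$ (which is induced by $R$, not $R'$); one should add that $T$ is contained in the relation induced by $R'$, so antisymmetry of $B$ follows from that of the contracted-tree order --- the same appeal to $R'$ that the paper makes.
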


To guide the reader through the proof, we are going to use an example of a poset $\mathcal{P}$ on 16 elements showed in Figure \ref{fig:characterization}. 

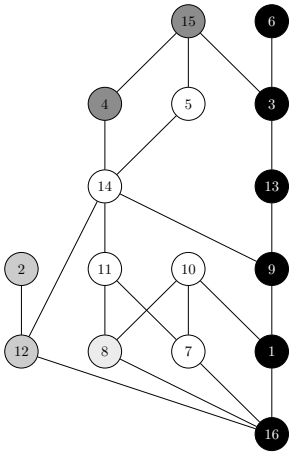
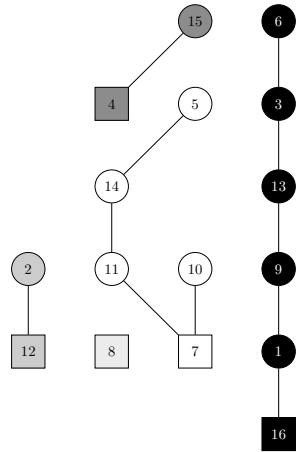
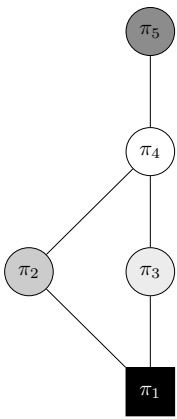
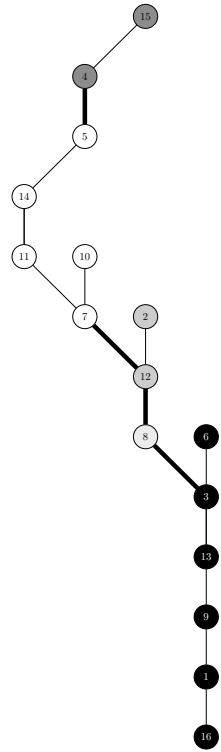
\begin{figure}[H]
    \centering
    \begin{subfigure}[t]{0.4\textwidth}
        \centering
        \scalebox{.55}{\begin{tikzpicture}

\node[elementb] (6) at (6,10) {\color{white}6};
\node[elementb] (3) at (6,8) {\color{white}3};
\node[elementb] (13) at (6,6)  {\color{white}13};
\node[elementb] (9) at (6,4) {\color{white}9};
\node[elementb] (1) at (6,2) {\color{white}1};
\node[elementb] (16) at (6,0)  {\color{white}16};

\node[elementg2] (15) at (4,10) {15};
\node[elementw] (5) at (4,8) {5};
\node[elementw] (10) at (4,4) {10};
\node[elementw] (7) at (4,2)  {7};

\node[elementg2] (4) at (2,8)  {4};
\node[elementw] (14) at (2,6) {14};
\node[elementw] (11) at (2,4) {11};
\node[elementg1] (8) at (2,2)  {8};

\node[elementgm] (2) at (0,4) {2};
\node[elementgm] (12) at (0,2) {12};

\draw[-,>=stealth] (16) -- (1);
\draw[-,>=stealth] (16) -- (7);
\draw[-,>=stealth] (16) -- (8);
\draw[-,>=stealth] (16) -- (12);

\draw[-,>=stealth] (1) -- (10);
\draw[-,>=stealth] (1) -- (9);

\draw[-,>=stealth] (7) -- (10);
\draw[-,>=stealth] (7) -- (11);

\draw[-,>=stealth] (8) -- (10);
\draw[-,>=stealth] (8) -- (11);

\draw[-,>=stealth] (12) -- (2);
\draw[-,>=stealth] (12) -- (14);

\draw[-,>=stealth] (9) -- (13);
\draw[-,>=stealth] (9) -- (14);

\draw[-,>=stealth] (11) -- (14);

\draw[-,>=stealth] (13) -- (3);

\draw[-,>=stealth] (14) -- (4);
\draw[-,>=stealth] (14) -- (5);

\draw[-,>=stealth] (3) -- (6);
\draw[-,>=stealth] (3) -- (15);

\draw[-,>=stealth] (5) -- (15);

\draw[-,>=stealth] (4) -- (15);

\end{tikzpicture}}
        \caption{Hasse diagram of the covering graph $C_\mathcal{P} = (V, \hat{R})$. Each color represents a part $\pi_i$ of a partition of $V$.}
        \label{fig:theorem_poset}
    \end{subfigure}
    \hfill
    \begin{subfigure}[t]{0.4\textwidth}
        \centering
        \scalebox{.55}{\begin{tikzpicture}

    \node[elementb] (6) at (6,10) {\color{white}6};
    \node[elementb] (3) at (6,8) {\color{white}3};
    \node[elementb] (13) at (6,6)  {\color{white}13};
    \node[elementb] (9) at (6,4) {\color{white}9};
    \node[elementb] (1) at (6,2) {\color{white}1};
    \node[root,fill_black] (16) at (6,0)  {\color{white}16};

    \node[elementw] (5) at (4,8) {5};
    \node[elementw] (10) at (4,4) {10};
    \node[root] (7) at (4,2)  {7};
    \node[elementw] (14) at (2,6) {14};
    \node[elementw] (11) at (2,4) {11};

    \node[elementg2] (15) at (4,10) {15};
    \node[root,fill_gray2] (4) at (2,8)  {4};

    \node[root,fill_gray1] (8) at (2,2)  {8};
    
    \node[elementgm] (2) at (0,4) {2};
    \node[root,fill_graym] (12) at (0,2) {12};

    \draw[-,>=stealth] (16) -- (1);

    \draw[-,>=stealth] (1) -- (9);
    
    \draw[-,>=stealth] (7) -- (10);
    \draw[-,>=stealth] (7) -- (11);

    \draw[-,>=stealth] (12) -- (2);
    
    \draw[-,>=stealth] (9) -- (13);
    
    \draw[-,>=stealth] (11) -- (14);
    
    \draw[-,>=stealth] (13) -- (3);
    
    \draw[-,>=stealth] (14) -- (5);
    
    \draw[-,>=stealth] (3) -- (6);
    
    \draw[-,>=stealth] (4) -- (15);
    
    \end{tikzpicture}}
        \caption{Hasse diagrams of the covering graphs $G_i = (\pi_i, A_i)$ after partitioning $V$. Squared elements represent roots $r_i$.}
        \label{fig:subposet_theorem}
    \end{subfigure}
    
    \begin{subfigure}[t]{0.4\textwidth}
        \centering
        \scalebox{.8}{\begin{tikzpicture}    
    \node[root, fill_black] (1) at (6,0) {\color{white}$\pi_1$};
    \node[elementgm] (2) at (4,2) {$\pi_2$};
    \node[elementg1] (3) at (6,2) {$\pi_3$};
    \node[elementw] (4) at (6,4) {$\pi_4$};
    \node[elementg2] (5) at (6,6) {$\pi_5$};   

    \draw[-,>=stealth] (1) -- (2);

    \draw[-,>=stealth] (1) -- (3);
    
    \draw[-,>=stealth] (3) -- (4);
    \draw[-,>=stealth] (2) -- (4);
    
    \draw[-,>=stealth] (4) -- (5);    
\end{tikzpicture}}
        \caption{Hasse diagram of the covering graph of poset $\mathcal{P}_\pi=(\pi, B)$.}
        \label{fig:induced_poset_theorem}
    \end{subfigure}
    \hfill
    \begin{subfigure}[t]{0.4\textwidth}
        \centering
        \scalebox{.4}{\begin{tikzpicture}
    \node[elementg2] (15) at (4,24) {15};
    \node[elementg2] (4) at (2,22)  {4};

    \node[elementw] (5) at (2,20) {5};
    \node[elementw] (14) at (0,18) {14};
    \node[elementw] (11) at (0,16) {11};
    \node[elementw] (10) at (2,16) {10};
    \node[elementw] (7) at (2,14)  {7};

    \node[elementgm] (2) at (4,14) {2};
    \node[elementgm] (12) at (4,12) {12};    
    
    \node[elementg1] (8) at (4,10)  {8};
    
    \node[elementb] (6) at (6,10) {\color{white}6};
    \node[elementb] (3) at (6,8) {\color{white}3};
    \node[elementb] (13) at (6,6)  {\color{white}13};
    \node[elementb] (9) at (6,4) {\color{white}9};
    \node[elementb] (1) at (6,2) {\color{white}1};
    \node[elementb] (16) at (6,0)  {\color{white}16};

    \draw[-,>=stealth] (16) -- (1);
    \draw[-,>=stealth] (1) -- (9);
    \draw[-,>=stealth] (13) -- (3);
    \draw[-,>=stealth] (3) -- (6);
    
    \draw[-,>=stealth,line width=1.5mm] (3) -- (8);
    \draw[-,>=stealth,line width=1.5mm] (8) -- (12);
    
    \draw[-,>=stealth] (12) -- (2);
    \draw[-,>=stealth,line width=1.5mm] (12) -- (7);
    
    \draw[-,>=stealth] (7) -- (10);
    
    \draw[-,>=stealth] (11) -- (14);
    \draw[-,>=stealth] (7) -- (11);
    \draw[-,>=stealth] (11) -- (14);

    \draw[-,>=stealth] (9) -- (13);

    \draw[-,>=stealth] (14) -- (5);

    \draw[-,>=stealth] (13) -- (3);

    \draw[-,>=stealth,line width=1.5mm] (5) -- (4);
    
    \draw[-,>=stealth] (4) -- (15);
    
    \end{tikzpicture}}
        \caption{Hasse diagram of an arboreal extension $\mathcal{A} = (V, R^\prime)$ of poset of Figure \ref{fig:theorem_poset} (jumps in bold).}
        \label{fig:theorem_extension}
    \end{subfigure}
    \caption{Example of the characterization stated in Theorem \ref{thm:characterization}.}
    \label{fig:characterization}
\end{figure}

\begin{proof}	
($\Rightarrow$) Let $C_\mathcal{P} = (V, \hat{R})$ be the covering graph of $\mathcal{P}$ (Figure \ref{fig:theorem_poset}), $\mathcal{A} = (V,R^\prime)$ be an arboreal extension of $\mathcal{P}$ with $s$ jumps, and $C_\mathcal{A} = (V,\hat{R^\prime})$ its covering graph (Figure \ref{fig:theorem_extension}). Let $J \subset R^\prime, |J| = s$ be the set of jumps (bold lines of Figures \ref{fig:theorem_extension}).
Note that the covering graph of an arboreal order is an arborescence, furthermore $J \subset \hat{R^\prime}$.
Now consider another graph $G = (V, \hat{R^\prime} \setminus J)$. This graph is a set of $s+1$ arborescences.
Call each arborescence $G_i = (\pi_i, A_i)$ and $\pi = \{\pi_1, \ldots, \pi_{s+1}\}$ (Figure \ref{fig:subposet_theorem}).

Note that the result of operation $\hat{R^\prime} \setminus J$ represents arcs associated with covering relations of $\mathcal{P}$, hence each $G_i$ corresponds to the covering graph of an induced subposet of $\mathcal{P}$. 
Therefore, each $\pi_i$ induces an arboreal order $\mathcal{P}_i = (\pi_i, R_i)$, where $R_i$ is the transitive closure of $A_i$.
Then property $\ref{it:induced_subposet}$ follows.

For property~\ref{it:rooted_poset}, we only need to show that $\mathcal{P}_\pi = (\pi,B)$ (Figure \ref{fig:induced_poset_theorem}) is antisymmetric, because transitivity and reflexivity are straightforward by construction.
Note that $(\pi_i,\pi_j) \in B$ if and only if $(r_i, r_j) \in R^\prime$, with $r_i$ and $r_j$ being roots of $\pi_i$ and $\pi_j$ (squared elements of Figure \ref{fig:subposet_theorem}), respectively.
Suppose $\mathcal{P}_\pi$ is not antisymmetric. Then there are $(\pi_i, \pi_j), (\pi_j,\pi_i) \in B$ such that $i \neq j$, as consequence $(r_i, r_j), (r_j,r_i) \in R^\prime$, which is a contradiction since $R^\prime$ is antisymmetric.

For property \ref{it:f_function}, let us construct a function $f:\pi \setminus \{r_{\pi}\} \to V$ as follows.
Let $(y_i, r_j) \in J, y_i \in \pi_i,r_j \in \pi_j, i \neq j$. 
Note that, as a consequence of $\mathcal{A}$ being arboreal, $r_j$ covers only $y_i$, and no other $y_j^\prime \in \pi_j$ covers an element of some partition $\pi_k, k \neq j$. Therefore, for each $\pi_j \in \pi \setminus \{r_{\pi}\}$, we can do $f(\pi_j) = y_i$ represent the image set of $f$ (see Table \ref{tab:function}). 
This also proves that $\mathcal{A}_\pi$ is arboreal. 
Now, we are going to prove that $\mathcal{A}_\pi$ is an extension of $\mathcal{P}_\pi$, that is, if $(\pi_i,\pi_j)\in B$, then $(\pi_i,\pi_j)\in B^\prime$. 
Let $r_i$ and $r_j$ be roots of $\pi_i$ and $\pi_j$, respectively and observe that $(\pi_i, \pi_j) \in B$ implies $(r_i, r_j) \in R^\prime$, then $(\pi_i, \pi_j) \in B^\prime$. 

\begin{table}[ht]
    \centering
    \caption{Values for functions $f(\pi_i)$ and $g(\pi_i)$ defined on properties \ref{it:f_function} and \ref{it:recursion} of Theorem \ref{thm:characterization}. These values correspond to the example presented in Figure \ref{fig:characterization}.}
    \begin{tabular}{r|r|r|r|r|l}
\toprule
$i$ & $j$ & $f(\pi_i)$ & $r_i$ & $(f(\pi_i), r_i)$ & $g(\pi_i)$  \\
\midrule
1 & $\nexists$ & $\nexists$ & 16 & $\nexists$ & $\emptyset$ \\ 
2 & 3 & 8 & 12 & (8,12) & $\{f(\pi_2)\} \cup g(\pi_3) = \{3,8\}$ \\ 
3 & 1 & 3 & 8 & (3,8) & $\{f(\pi_3)\} \cup g(\pi_1) = \{3\}$ \\ 
4 & 2 & 12 & 7 & (12,7) & $\{f(\pi_4)\} \cup g(\pi_2) = \{3,8,12\}$ \\ 
5 & 4 & 5 & 4 & (5,4) & $\{f(\pi_5)\} \cup g(\pi_4) = \{3,5,8,12\}$ \\ 

\bottomrule
\end{tabular}

    \label{tab:function}
\end{table}

For property \ref{it:recursion}, let $x \prec_{\mathcal{P}} y, x \in \pi_u, y \in \pi_v, u \neq v$. Recall that $\mathcal{A}$ is an arboreal extension of $\mathcal{P}$ so there is a path from $r$ to $y$ going through $x$. Moreover, since $x$ and $y$ are into different parts, then there is at least one jump in that path. Observe that by the recursive definition of $g$, $g(\pi_v)$ contains all the sources of existing jumps from $\pi_r$ to $\pi_v$, including the one belong to $\pi_u$, which is exactly $z$, and therefore $(x,z) \in \pi_u$ (see Table \ref{tab:function}).

($\Leftarrow$) Let $\mathcal{P} = (V,R)$ be a partial order and $\mathbf{V} = \{\pi_1,\ldots,\pi_{s+1}\}$ be a partition of $V$, satisfying properties \ref{it:induced_subposet}, \ref{it:rooted_poset}, \ref{it:f_function} and \ref{it:recursion}. 
Define the following sets: $A_1 = \{(x,x) \mid x \in V\}$, $A_2 = \{(r, x) \mid x \in V \setminus \{r\}\}$, $A_3=\{(x,y) \mid (x,y) \in R_i, i = 1, \ldots, s+1\}$, $A_4=\{(f(\pi_j), r_j) \mid \pi_j \in \mathbf{V} \setminus \{\pi_1\}, f(\pi_j)\in \pi_i, r_j \in \pi_j, i \neq j\}$, squared elements represent $r_j$ and filled elements represent $f(\pi_j)$). 
Construct the relation $\mathcal{A} = (V,R^\prime)$, such that $R^\prime$ is the transitive closure of $A_1 \cup A_2 \cup A_3 \cup A_4$. Let us show that $\mathcal{A}$ is an arboreal extension of $\mathcal{P}$ with $s$ jumps.

First, we prove that $\mathcal{A}$ is a partial order. It is easy to see that $\mathcal{A}$ is reflexive (set $A_1$) and transitive ($R^\prime$ is a transitive closure). 
Suppose $\mathcal{A}$ is not antisymmetric, so there are $x,y \in V, x \neq y$ such that $\{(x,y),(x,y)\} \subset R^\prime$.
This is only possible, by property \ref{it:induced_subposet}, if $x$ and $y$ belong to different sets of $\pi$, so $x \in \pi_i, y \in \pi_j, i \neq j$.
Note that, by property \ref{it:rooted_poset}, when constructing $\mathcal{Q}$ we have $\{(\pi_i,\pi_j), (\pi_j,\pi_i)\} \subset \hat{B}$ and therefore $\mathcal{P}_\pi$ is not a partial order, a contradiction.
So $\mathcal{A}$ must be antisymmetric.

We know that the covering graph of $\mathcal{A}$, $C_\mathcal{A} = (V,\hat{R^\prime})$, must be an arborescence to be an arboreal order.
Evidently, $C_{\mathcal{A}}$ has an unique minimal element, which is $r$ (set $A_2$). 
Assume $C_{\mathcal{A}}$ is not an arborescence, so there are distinct elements $u,v,z$ that form a violation ($\{(u,v),(z,v)\} \subset \hat{R^\prime}$).
By property \ref{it:induced_subposet}, $u,v,z$ could not be in the same set $\pi_k \in \pi$.
Assume $v \in \pi_l, u \in \pi_m, z \in \pi_n, m \neq l, n \neq l$, then we would have $(f(\pi_l), v) \in A_4$ and $f(\pi_l) = u = z$, a contradiction. 
Now, consider $\{u,v\} \in \pi_m, z \in \pi_n, m \neq n$, so we would have $(f(\pi_m), v) \in A_4$, but, since $(u,v) \in R_m$, $v$ is not the root of $\pi_m$, which contradicts property \ref{it:f_function}. 
The same happens if we exchange $u$ and $z$.
Therefore, $\mathcal{A}$ is arboreal.

Now, observe that $\text{\textbar}A_4\text{\textbar} = s$ and if $(x,y) \in A_4, x \in \pi_i, y \in \pi_j, i \neq j$ then $x \prec_\mathcal{A} y$ and $(x,y) \notin R$ by property~\ref{it:f_function}. 
Hence each $(x,y) \in A_4$ is a jump of $\mathcal{A}$.

Finally, we need to prove that $\mathcal{A}$ is an extension of $\mathcal{P}$, that is, $R \subseteq R^\prime$.
Let $\hat{R}$ be the transitive reduction of $R$, it is enough to show that $\hat{R} \subseteq R^\prime$. 
Note that if $\{x,y\} \in \pi_i$ then $(x,y) \in A_3$.
Now, consider that $(x,y) \in \hat{R}$ and $x \in \pi_i, y \in \pi_j, i \neq j$.
We need to show that there is a path from $x$ to $y$ in $R^\prime$.
First, recall that, by property \ref{it:induced_subposet}, $(r_j, y) \in R^\prime$, since $(r_j,y) \in A_3$ or $(r_j,y) \in A_1$, if $r_j = y$. 
Furthermore, by property \ref{it:recursion}, there is $z = f(\pi_u) \in g(\pi_j)$, such that $z \in \pi_i$ and $\{(x,z),(z,r_j)\} \in R^\prime$. 
Moreover, there is a sequence of sources of jumps belonging to $g(\pi_j)$ and as a consequence we can obtain a path that contains those elements from $z$ to $r_j$, joining the three relations $(x,z)$, $(z,r_j)$, $(r_j,y)$.

\end{proof}

\section{Solution strategies}
\label{sec:strategies}
In this section, we present an integer programming formulation for the arboreal jump number problem.  We also present a new greedy heuristic based on an algorithm to find minimal arboreal extensions \cite{figueiredo2013}.

\subsection{Integer programming formulation}
\label{sec:flow_model}
Let $C_\mathcal{P} = (V, \hat{R})$ be the covering graph of the partial order $\mathcal{P}$ and let $E = \hat{R} \cup Z$ be a set such that $Z = \{ (i, j), (j, i) \mid i, j \in V, i \| j\}$. Let $c_{ij}, (i,j) \in E$ be $1$ if $(i,j) \in \hat{R}$ and $0$ if $(i,j) \in Z$.
Let 
\begin{equation*}
x_{ij}=\begin{cases}
         1 & \text{if } (i,j) \in E \text{ is in the solution} \\
         0 & \text{otherwise.} \\
     \end{cases}
\end{equation*}
Let $f_{ij}^k$ be the amount of flow that goes through the arc $(i,j)$ having vertex $k$ as destination.

The following model formulates the arboreal jump number problem.

\begin{equation}
\max \sum_{ij}c_{ij}x_{ij},
\label{eq:objective}
\end{equation}
subject to:

\begin{flalign}
\qquad & \sum_{i \in N^-(j)} f_{ij}^j = 1 & \forall j \in V - \{r\}\label{eq:sum_flow_vertex}\\
\qquad & \sum_{i \in N^-(j)} f_{ij}^k = \sum_{i \in N^+(j)} f_{ji}^k & \forall j,k \in V - \{r\}, k \neq j \label{eq:flow_conservation}\\
\qquad & \sum_{i \in N^-(j)} x_{ij} = 1 & \forall j \in V - \{r\} \label{eq:indegree_vertex}\\
\qquad & \sum_{i \in N^-(j)} f_{ij}^k = 1 & \forall (j,k) \in \hat{R}, j\neq r \label{eq:assure_relation}\\
\qquad & f_{ij}^k \leq x_{ij} & \forall (i,j) \in E, \forall k \in V - \{r\} \label{eq:linking_variables}\\
\qquad &  & x_{ij}\in \{0,1\} \label{eq:binary}\\
\qquad & & f_{ij}^k\geq 0 \label{eq:nonnegative}
\end{flalign}

Note that the objective function \ref{eq:objective} maximizes the number of arcs from $\hat{R}$ that is used in the solution. Since the solution is an arborescence, the number of arcs in the solution is constant, then the arboreal jump number is given by the number of arcs belonging to $Z$ that are used in the solution, which is implicitly minimized in our model.

Constraints (\ref{eq:sum_flow_vertex}) assure that each vertex that is not the root gets a unit of flow.
Constraints (\ref{eq:flow_conservation}) are flow conservation constraints, assuring that the flow to a given vertex $k$ is not consumed elsewhere.
Constraints (\ref{eq:indegree_vertex}) restrict the in-degree of each vertex, implying the solution is in fact an arborescence.
For each pair $(j,k) \in \hat{R}$, constraints (\ref{eq:assure_relation}) assure that the flow to vertex $k$ goes through vertex $j$. This fact along with the constraints (\ref{eq:linking_variables}) guarantee that the relation $R$ will be present in the arborescence.
Constraint (\ref{eq:linking_variables}) allow the flow to move only along arcs in the solution. 
Constraints (\ref{eq:binary}) and (\ref{eq:nonnegative}) guarantee the binary and non-negativity properties of the solution, respectively.

\subsection{Heuristic} \label{sec:heuristic}
A \textit{minimal arboreal extension} is an arboreal extension of a poset such that the removal of any of its jumps makes it no longer arboreal.
While the task of finding a minimum arboreal extension is NP-hard, there is a simple polynomial-time algorithm to find a minimal arboreal extension for a given order, detailed in Algorithm \ref{algo:plainMinExt}  \cite{figueiredo2013}. 
The main idea of this algorithm is to iteratively add relations between incomparable elements in the original poset eliminating at least one violation and ensuring that no new violation is created.    

\begin{algorithm}
\caption{Algorithm to find a minimal arboreal extension for a partial order $\mathcal{P}$}
\label{algo:plainMinExt}
\begin{algorithmic}[1]
	\Require partial order $\mathcal{P}$
	\Ensure minimal arboreal extension $\mathcal{A}$ of $\mathcal{P}$
	\State define $\mathtt{numviol}_\mathcal{P}$ as the number of violations of the partial order $\mathcal{P}$
	\State $\mathcal{A} \leftarrow \mathcal{P}$
	\While{$\mathtt{numviol}_\mathcal{A} > 0$}
		\State choose a violation $v$, $x_1$, $x_2$, with $v$ being a violator \label{algo:violation}
		\State let $z$ be a minimal element of $N^-_\mathcal{A}[x_1] \setminus N^-_\mathcal{A}[x_2]$ \label{algo:z}
		\State $\mathcal{A} \leftarrow \mathcal{A} + (x_2,z)$ \label{algo:extP}
	\EndWhile
\end{algorithmic}
\end{algorithm}

In each iteration, a violation $v$, $x_1$ and $x_2$ is selected at line \ref{algo:violation}.
In the next step, at line \ref{algo:z}, the algorithm identifies a minimal element $z$ that precedes element $x_1$ (it may be $x_1$ itself) but does not precede element $x_2$, which means that $x_2$ and $z$ are incomparable.
An extension of $\mathcal{P}$ is generated at line \ref{algo:extP} inserting the jump $(x_2,z)$.
The fact that $z$ is minimal assures that no new violation is added.
The algorithm continues until there is no violations left in the extension. 

Observe that Algorithm \ref{algo:plainMinExt} does not consider any rule regarding the order in which the violations are selected.
However, the use of different rules may affect the final number of the jump-minimal extension $\mathcal{A}$.
For instance, it is preferable to first select a violation in which its removal also removes other violations.

A simple way to select a violation is to sort the elements in some lexicographical order and choose always the first violator in the given order.
This arbitrary order though, could lead to a solution with many jumps, even for simple instances.
Figure \ref{fig:heuristicSolution} illustrates an example of such a case.
The elements of the poset in Figure \ref{fig:ladderPoset} are enumerated in a way that using this simple approach produces the worst possible arboreal extension having four jumps (Figure \ref{fig:badSolution}) , while the optimal solution contains only one jump, as showed in Figure \ref{fig:goodSolution}. 

\begin{figure}[H]
	\centering
	\begin{subfigure}[t]{0.3\textwidth}
		\centering
		\scalebox{.6}{\begin{tikzpicture}
\node[element] (1) at (0,14) {1};
\node[element] (3) at (0,12) {3};
\node[element] (5) at (0,10) {5};
\node[element] (7) at (0,8)  {7};
\node[element] (9) at (0,6) {9};

\node[element] (2) at (2,12) {2};
\node[element] (4) at (2,10)  {4};
\node[element] (6) at (2,8) {6};
\node[element] (8) at (2,6)  {8};
\node[element] (10) at (2,4)  {10};

\draw[-,>=stealth] (3) -- (1);
\draw[-,>=stealth] (5) -- (3);
\draw[-,>=stealth] (7) -- (5);
\draw[-,>=stealth] (9) -- (7);

\draw[-,>=stealth] (4) -- (2);
\draw[-,>=stealth] (6) -- (4);
\draw[-,>=stealth] (8) -- (6);
\draw[-,>=stealth] (10) -- (8);

\draw[-,>=stealth] (10) -- (9);
\draw[-,>=stealth] (8) -- (7);
\draw[-,>=stealth] (6) -- (5);
\draw[-,>=stealth] (4) -- (3);
\draw[-,>=stealth] (2) -- (1);
\end{tikzpicture}}
		\caption{Hasse diagram of a poset with 10 nodes.}
		\label{fig:ladderPoset}
	\end{subfigure}	
	~
	\begin{subfigure}[t]{0.3\textwidth}
		\centering
		\scalebox{.6}{\begin{tikzpicture}
\node[element] (1) at (0,18) {1};
\node[element] (2) at (0,16) {2};
\node[element] (3) at (0,14) {3};
\node[element] (4) at (0,12)  {4};
\node[element] (5) at (0,10) {5};
\node[element] (6) at (0,8) {6};
\node[element] (7) at (0,6)  {7};
\node[element] (8) at (0,4)  {8};
\node[element] (9) at (0,2) {9};
\node[element] (10) at (0,0)  {10};

\draw[-,>=stealth] (1) -- (2);
\draw[-,>=stealth] (3) -- (4);
\draw[-,>=stealth] (5) -- (6);
\draw[-,>=stealth] (7) -- (8);
\draw[-,>=stealth] (9) -- (10);

\draw[-,>=stealth, dashed] (2) -- (3);
\draw[-,>=stealth, dashed] (5) -- (4);
\draw[-,>=stealth, dashed] (6) -- (7);
\draw[-,>=stealth, dashed] (9) -- (8);
\end{tikzpicture}}
		\caption{Hasse diagram of an arboreal extension of poset (\ref{fig:ladderPoset}) with 4 jumps.}
		\label{fig:badSolution}
	\end{subfigure}
	~
	\begin{subfigure}[t]{0.3\textwidth}
		\centering
		\scalebox{.6}{\begin{tikzpicture}
\node[element] (1) at (0,18) {1};
\node[element] (3) at (0,16) {3};
\node[element] (5) at (0,14) {5};
\node[element] (7) at (0,12)  {7};
\node[element] (9) at (0,10) {9};

\node[element] (2) at (0,8){2};
\node[element] (4) at (0,6)  {4};
\node[element] (6) at (0,4) {6};
\node[element] (8) at (0,2)  {8};
\node[element] (10) at (0,0)  {10};

\draw[-,>=stealth] (3) -- (1);
\draw[-,>=stealth] (5) -- (3);
\draw[-,>=stealth] (7) -- (5);
\draw[-,>=stealth] (9) -- (7);

\draw[-,>=stealth] (4) -- (2);
\draw[-,>=stealth] (6) -- (4);
\draw[-,>=stealth] (8) -- (6);
\draw[-,>=stealth] (10) -- (8);

\draw[-,>=stealth, dashed] (9) -- (2);

\end{tikzpicture}}
		\caption{Hasse diagram of an arboreal extension of poset (\ref{fig:ladderPoset}) with 1 jump.}
		\label{fig:goodSolution}
	\end{subfigure}
	\caption{Hasse diagrams of a poset and two arboreal extension obtained with Algorithm \ref{algo:plainMinExt} using lexicographical order (\ref{fig:badSolution}) and Algorithm \ref{algo:greedyMinExt} (\ref{fig:goodSolution}).}
	\label{fig:heuristicSolution}
\end{figure}
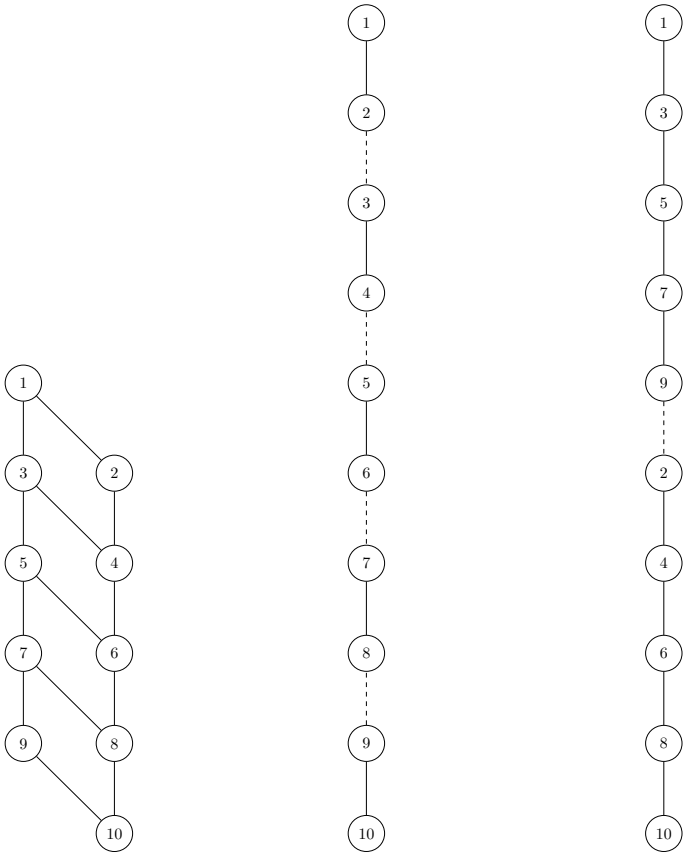

A natural property of an element in a poset is its hierarchical level, or just level.
In case of rooted posets, it is the number of arcs of the longest path in the covering graph between the root and the element.
During the process of removing a given violation, there is a chance to remove precedent violations belonging to lowermost levels. 
Observe that it is the case in the aforementioned example.
In Algorithm \ref{algo:greedyMinExt}, we propose a greedy heuristic considering the level of the elements and the number of violations that precede the violator.

\begin{algorithm}[H]
	\caption{Heuristic for the minimum arboreal extension for a partial order $\mathcal{P}$ using level criterion to select violations}
	\label{algo:greedyMinExt}
	\begin{algorithmic}[1]
		\Require a partial order $\mathcal{P}$
		\Ensure a minimal arboreal extension $\mathcal{A}$ of $\mathcal{P}$
		\State define $\mathtt{numviol}_\mathcal{P}$ as the number of violations of the partial order $\mathcal{P}$
		\State define $\mathtt{lvl}_\mathcal{P}(v)$ as the level of element $v$, such that $v$ is a violator in $\mathcal{P}$
		\State for each $x$ and $v$ such that  $x \prec_\mathcal{P} v$, define $\mathtt{numpred}_\mathcal{P}(x,v)$ as the number of violators $v^\prime$ such that  and $v^\prime \leq_\mathcal{P} x$ 
		\State $\mathcal{A} \leftarrow \mathcal{P}$
		
		\While{$\mathtt{numviol}_\mathcal{A} > 0$}
		\State select violator $v$ such that $\mathtt{lvl}_\mathcal{A}(v)$ is maximum \label{algo:v}
		\State select element $x_1$ such that $\mathtt{numpred}_\mathcal{A}(x_1,v)$ is maximum \label{algo:x1}
		\State select $x_2$ and $z$ such that: (i)~$x_2 \neq x_1, x_2  \prec_\mathcal{A} v$, (ii)~$z$ is a minimal element of $N^-_\mathcal{A}[x_1] \setminus N^-_\mathcal{A}[x_2]$ and (iii)~the number of violations in $\mathcal{A} + (x_2,z)$ is minimum \label{algo:x2}
		\State $\mathcal{A} \leftarrow \mathcal{A} + (x_2,z)$ \label{algo:jump}
		\EndWhile
	\end{algorithmic}
\end{algorithm}

Algorithm \ref{algo:greedyMinExt} iterates until there are no more violations in the extension and it follows three rules to select the violation $v$, $x_1$ and $x_2$ at each iteration.
By the first rule, at line \ref{algo:v}, the choice of $v$ assures that the level of the violator is maximum. 
After selecting $v$, the algorithm considers the set of possible choices for $x_1$.
Then, it counts the number of violators that precedes each element of that set. $x_1$ is selected as the element with the maximum number of violators that precedes it.
Line \ref{algo:x1} represents this choice and states the second rule.
Finally, at line (line \ref{algo:x2}), the algorithm tests the insertion of all possible pairs $(x_2,z)$ to guarantee that it adds the one that removes the largest number of violations (line \ref{algo:jump}).

\section{Computational results}
\label{sec:results}
In this section, we evaluate the performance of the two solutions strategies for the arboreal jump number problem described previously. 

\subsection{Computational environment}
The integer programming formulation was implemented using the \texttt{C++} API of
\texttt{ILOG CPLEX} solver, release $20.1$ \cite{cplex}.
We set up the maximum running time limit of the solver to $7200$ seconds, and the number of threads to four.
It uses default options for all other features offered by \texttt{ILOG CPLEX}.
The greedy heuristic was also implemented in \texttt{C++}.

Both solutions were compiled using \texttt{g++} under the Ubuntu operating system, release 18.04.6 LTS.
We executed the computational experiments in a computer equipped with a Core i7 980 processor with 6 core running at 1333Mhz and having 24Gb of RAM memory.

\subsection{Benchmark instances}
The computational results were conducted over a set of $41$ instances for the sequential ordering problem available in \texttt{TSPLIB} \cite{tsplib}, and a root was inserted in each instance.
The first column of Table \ref{tab:instances} gives the instance, the next two columns describe the instances in terms of its number of vertices $\text{\textbar}V\text{\textbar}$, the number of pairs of incomparable elements $\frac{\text{\textbar}Z\text{\textbar}}{2}$, recall that $\text{\textbar}Z\text{\textbar}$ contains two arcs for each incomparable pair. Finally, the last two columns present the number of violators $\mathtt{numvior}$ and the number of violations $\mathtt{numvion}$ of the instance. 

\begin{table}[htbp]
\centering
\begin{scriptsize}
\begin{tabular}{l|rrrrrr}
\toprule
Instance	&	$\text{\textbar}V\text{\textbar}$	&	$\text{\textbar}\hat{R}\text{\textbar}$	&	$\frac{\text{\textbar}Z\text{\textbar}}{2}$	&	$\mathtt{numvior}$	&	$\mathtt{numvion}$	\\
\midrule
br17.10	&	18	&	18	&	105	&	1	&	1	\\
br17.12	&	18	&	19	&	98	&	2	&	2	\\
esc07	&	9	&	10	&	14	&	1	&	3	\\
esc11	&	13	&	12	&	50	&	0	&	0	\\
esc12	&	14	&	18	&	55	&	2	&	9	\\
esc25	&	27	&	27	&	289	&	1	&	1	\\
esc47	&	49	&	49	&	1049	&	1	&	1	\\
esc63	&	65	&	149	&	1720	&	10	&	405	\\
esc78	&	80	&	79	&	2720	&	0	&	0	\\
ft53.1	&	54	&	55	&	1314	&	2	&	2	\\
ft53.2	&	54	&	58	&	1296	&	5	&	5	\\
ft53.3	&	54	&	70	&	1109	&	12	&	25	\\
ft53.4	&	54	&	75	&	567	&	18	&	27	\\
ft70.1	&	71	&	73	&	2329	&	3	&	3	\\
ft70.2	&	71	&	80	&	2298	&	8	&	12	\\
ft70.3	&	71	&	96	&	2131	&	17	&	38	\\
ft70.4	&	71	&	101	&	1021	&	26	&	36	\\
kro124p.1	&	101	&	103	&	4818	&	3	&	3	\\
kro124p.2	&	101	&	113	&	4783	&	10	&	17	\\
kro124p.3	&	101	&	131	&	4585	&	25	&	38	\\
kro124p.4	&	101	&	150	&	2546	&	41	&	59	\\
p43.1	&	44	&	46	&	850	&	3	&	3	\\
p43.2	&	44	&	46	&	827	&	3	&	3	\\
p43.3	&	44	&	54	&	765	&	8	&	15	\\
p43.4	&	44	&	59	&	365	&	11	&	22	\\
prob.100	&	100	&	99	&	4712	&	0	&	0	\\
prob.42	&	42	&	42	&	761	&	1	&	1	\\
rbg048a	&	50	&	197	&	681	&	14	&	1009	\\
rbg050c	&	52	&	260	&	717	&	46	&	1031	\\
rbg109a	&	111	&	629	&	557	&	93	&	1924	\\
rbg150a	&	152	&	963	&	841	&	140	&	3173	\\
rbg174a	&	176	&	1116	&	1096	&	166	&	3844	\\
rbg253a	&	255	&	1724	&	1697	&	245	&	6140	\\
rbg323a	&	325	&	2416	&	3801	&	303	&	13044	\\
rbg341a	&	343	&	2547	&	3667	&	311	&	11309	\\
rbg358a	&	360	&	3243	&	7367	&	354	&	16557	\\
rbg378a	&	380	&	3073	&	7668	&	369	&	14820	\\
ry48p.1	&	49	&	49	&	1069	&	1	&	1	\\
ry48p.2	&	49	&	52	&	1055	&	4	&	4	\\
ry48p.3	&	49	&	62	&	949	&	9	&	22	\\
ry48p.4	&	49	&	70	&	485	&	15	&	30	\\
\bottomrule
\end{tabular}

\end{scriptsize}
\caption{Description of \texttt{SOP} instances in terms of number of elements $\text{\textbar}V\text{\textbar}$, number of relations in the covering graph $\text{\textbar}\hat{R}\text{\textbar}$, number of incomparable pairs $\frac{\text{\textbar}Z\text{\textbar}}{2}$, number of violator $\mathtt{numvior}$ and number of violations $\mathtt{numvion}$.}
\label{tab:instances}
\end{table}

\subsection{Results}

Table \ref{tab:results} reports the results for each instance obtained by the three strategies presented in this work.
Again, the first column shows the instance names.
The second column shows the cost of the best solution obtained with the integer programming model within the time limit.
The values in bold corresponds to proven optimal costs.
The next two columns show the wall time and the final gap whenever the time limit was not reach.
The next column shows the cost of the solution found by Algorithm \ref{algo:plainMinExt} \cite{figueiredo2013}, as described in section \ref{sec:heuristic}.
The last two columns present the cost of the solution found by Algorithm \ref{algo:greedyMinExt} introduced in this work and its wall time.

\begin{table}[htbp]
\centering
\begin{scriptsize}
\begin{tabular*}{\textwidth}{l|rrr|r|rr}
\toprule
 & \multicolumn{3}{|c}{Integer programming model} & \multicolumn{1}{|c}{Algo \cite{figueiredo2013}} & \multicolumn{2}{|c}{Heuristic}\\
\cmidrule{2-7}
Instance	&	\multicolumn{1}{c}{Best}	&	\multicolumn{1}{c}{Wall time (s)}	&	\multicolumn{1}{c}{Gap (\%)}	&	\multicolumn{1}{|c}{Best}	&	\multicolumn{1}{|c}{Best}	&	\multicolumn{1}{c}{Wall time (s)}	\\
\midrule
\textbf{br17.10}	&	\textbf{1}	&	0.11	&	0.00	&	1	&	1	&	$< 0.01$	\\
\textbf{br17.12}	&	\textbf{2}	&	0.14	&	0.00	&	2	&	2	&	$< 0.01$	\\
\textbf{esc07}	&	\textbf{2}	&	0.01	&	0.00	&	2	&	2	&	$< 0.01$	\\
\textbf{esc11}	&	\textbf{0}	&	0.00	&	0.00	&	0	&	0	&	$< 0.01$	\\
\textbf{esc12}	&	\textbf{5}	&	0.08	&	0.00	&	5	&	5	&	$< 0.01$	\\
\textbf{esc25}	&	\textbf{1}	&	0.35	&	0.00	&	1	&	1	&	$< 0.01$	\\
\textbf{esc47}	&	\textbf{1}	&	7.79	&	0.00	&	1	&	1	&	$< 0.01$	\\
esc63	&	-	&	7200.00	&	-	&	34	&	34	&	0.03	\\
\textbf{esc78}	&	\textbf{0}	&	2.37	&	0.00	&	0	&	0	&	$< 0.01$	\\
\textbf{ft53.1}	&	\textbf{2}	&	8.15	&	0.00	&	2	&	2	&	$< 0.01$	\\
ft53.2	&	6	&	7200.00	&	4.26	&	5	&	5	&	$< 0.01$	\\
ft53.3	&	-	&	7200.00	&	-	&	14	&	14	&	$< 0.01$	\\
ft53.4	&	-	&	7200.00	&	-	&	18	&	14	&	0.01	\\
\textbf{ft70.1}	&	\textbf{3}	&	39.22	&	0.00	&	3	&	3	&	$< 0.01$	\\
ft70.2	&	15	&	7200.00	&	10.91	&	10	&	10	&	$< 0.01$	\\
ft70.3	&	-	&	7200.00	&	-	&	24	&	19	&	0.01	\\
ft70.4	&	-	&	7200.00	&	-	&	23	&	17	&	0.02	\\
\textbf{kro124p.1}	&	\textbf{3}	&	219.65	&	0.00	&	3	&	3	&	$< 0.01$	\\
kro124p.2	&	-	&	7200.00	&	-	&	13	&	12	&	$< 0.01$	\\
kro124p.3	&	-	&	7200.00	&	-	&	28	&	23	&	0.02	\\
kro124p.4	&	-	&	7200.00	&	-	&	32	&	27	&	0.06	\\
\textbf{p43.1}	&	\textbf{3}	&	8.71	&	0.00	&	3	&	3	&	$< 0.01$	\\
\textbf{p43.2}	&	\textbf{3}	&	7.46	&	0.00	&	3	&	3	&	$< 0.01$	\\
p43.3	&	8	&	7200.00	&	5.71	&	11	&	8	&	$< 0.01$	\\
\textbf{p43.4}	&	\textbf{9}	&	2743.47	&	0.00	&	12	&	10	&	$< 0.01$	\\
\textbf{prob.100}	&	\textbf{1}	&	2.49	&	0.00	&	1	&	1	&	$< 0.01$	\\
\textbf{prob.42}	&	\textbf{0}	&	0.32	&	0.00	&	0	&	0	&	0.01	\\
rbg048a	&	-	&	7200.00	&	-	&	32	&	32	&	0.03	\\
rbg050c	&	-	&	7200.00	&	-	&	36	&	33	&	0.05	\\
rbg109a	&	-	&	7200.00	&	-	&	86	&	75	&	0.84	\\
rbg150a	&	-	&	7200.00	&	-	&	125	&	107	&	3.51	\\
rbg174a	&	-	&	7200.00	&	-	&	146	&	117	&	6.19	\\
rbg253a	&	-	&	7200.00	&	-	&	210	&	167	&	25.75	\\
rbg323a	&	-	&	7200.00	&	-	&	279	&	236	&	87.15	\\
rbg341a	&	-	&	7200.00	&	-	&	293	&	243	&	99.86	\\
rbg358a	&	-	&	7200.00	&	-	&	299	&	245	&	134.27	\\
rbg378a	&	-	&	7200.00	&	-	&	335	&	270	&	167.09	\\
\textbf{ry48p.1}	&	\textbf{1}	&	5.04	&	0.00	&	1	&	1	&	$< 0.01$	\\
\textbf{ry48p.2}	&	\textbf{4}	&	193.94	&	0.00	&	4	&	4	&	$< 0.01$	\\
ry48p.3	&	-	&	7200.00	&	-	&	13	&	12	&	$< 0.01$	\\
ry48p.4	&	-	&	7200.00	&	-	&	14	&	13	&	0.01	\\
\bottomrule
\end{tabular*}

\end{scriptsize}
\caption{Computational results for the presented solutions strategies and the literature algorithm for finding a minimal arboreal extension. The values in bold are proven to be optimal.} 
\label{tab:results}
\end{table}

The integer programming model is able to find an optimality certificate for $18$ instances and a feasible solution for other three instances within the time limit.
These instances are the smallest ones according to the number of violators and the size of set $Z$.
For the other $20$ instances the model could not found any solution on the provided time limit.

The heuristic, on the other hand, found feasible solutions instantaneously for most instances, taking up to tree minutes to solve instances with more than $300$ violators.
For all but one instance, the solution cost found by the heuristic was at least as good as the one found by the integer programming model.
Instance \texttt{p43.4} was proven to have an optimal arboreal extension within nine jumps. However, the heuristic found a solution with ten jumps and Algorithm \ref{algo:plainMinExt} obtained a solution containing 12 jumps. The results for this instance exemplifies a case in which the heuristic may not find the optimal solution but still find a better solution than the minimal arboreal extension algorithm.

Finally, the proposed heuristic was able to find a better solution for $19$ out of $41$ instances than the algorithm from the literature, mostly the harder instances, obtained a difference of up $65$ jumps between both algorithms.

\section{Conclusions}
\label{sec:conclusions}
In this study, we proved that the ground set of an arboreal extension with $k$ jumps can be partitioned into $k+1$ sets that follow four structural properties.
Additionally, we also presented an exact and a heuristic strategies to solve the arboreal jump number problem and reported some computational results. 

Our integer programming model based on multi-flow was able to provide optimal solution for $18$ out of $41$ instances within the running time limit provided.
Our proposed greedy heuristic incorporates rules for selecting violations to be removed to a polynomial-time algorithm to find minimal arboreal extensions.
It was able to find feasible solutions for all instances in less than three minutes. 
The solutions are at least as good, and in most cases better than the ones obtained with the algorithm from the literature.

As far as we know, this is the first study with computational results for this problem.
The heuristic shows very good results for instances where the optimal value is known.
The performance of the heuristic on harder instances can not be assessed since optimal solutions are not available for those instances.

An research direction to improve this work points out to prove some lower bound to the problem considering the number of violators belonging to the input poset. Consequently it would be possible to determine the quality of the solutions given by the described heuristic.
Furthermore, we plan to investigate valid inequalities for
the arboreal jump number problem in order to develop a more efficient exact algorithm.


\bibliography{refs}


\end{document}